\numberwithin{equation}{section} 
\newtheorem{thm}{Theorem}[section]
\newtheorem{cor}[thm]{Corollary}
\newtheorem{prop}[thm]{Proposition}
\theoremstyle{definition}
\newtheorem{defn}[thm]{Definition}
\theoremstyle{remark}
\newtheorem{rem}[thm]{Remark}
\newcommand{\bea}{\begin{eqnarray}}
\newcommand{\eea}{\end{eqnarray}}
\newcommand{\ba}{\begin{array}}
\newcommand{\ea}{\end{array}}
\newcommand{\bc}{\begin{center}}
\newcommand{\ec}{\end{center}}
\newcommand{\be}{\begin{equation}}
\newcommand{\ee}{\end{equation}}
\def\bn{{\mathbb N}}
\def\bz{{\mathbb Z}}
\def\d{\delta}
\def\i{\varepsilon}
\def\cf{{\mathcal F}}
\def\cm{{\mathcal M}}
\def\a{\alpha}
\def\b{\beta}
\def\m{\mu}
\def\n{\nu}
\def\l{\lambda}
\def\g{\gamma}
\begin{document}

\title[On $L_1$-weak ergodicity]
{On $L_1$-Weak Ergodicity of nonhomogeneous discrete Markov
processes and its applications}

\author{Farrukh Mukhamedov}
\address{Farrukh Mukhamedov\\
 Department of Computational \& Theoretical Sciences\\
Faculty of Science, International Islamic University Malaysia\\
P.O. Box, 141, 25710, Kuantan\\
Pahang, Malaysia} \email{{\tt far75m@yandex.ru} {\tt
farrukh\_m@iium.edu.my}}

\begin{abstract}
In the present paper we investigate the $L_1$-weak ergodicity of
nonhomogeneous discrete Markov processes with general state spaces.
Note that the $L_1$-weak ergodicity is weaker than well-known weak
ergodicity. We provide a necessary and sufficient condition for such
processes to satisfy the $L_1$-weak ergodicity. Moreover, we apply
the obtained results to establish $L_1$-weak ergodicity of discrete
time quadratic stochastic processes. As an application of the main
result, certain concrete examples are also provided. \vskip 0.3cm
\noindent

{\it Keywords:} weak ergodicity; nonhomogeneous discrete Markov process; the
Doeblin's Condition; quadratic stochastic process.\\

{\it AMS Subject Classification:} 60J10; 15A51.
\end{abstract}

\maketitle

\section{Introduction}

Markov processes with general state space have become a subject of
interest due to their applications in many branches of mathematics
and natural sciences. One of the important notions in these studies
is ergodicity of Markov processes, i.e. the tendency for a chain to
'forget' the distant past. In many cases, a huge number of
investigations were devoted to such processes with countable state
space (see for example, \cite{D}-\cite{JI},\cite{HB},\cite{T}). For
nonhomogeneous Markov processes with countable state space,
investigation of the general conditions of weak ergodicity leads to
the definition of a special subclass of regular matrices. In many
papers (see for example, \cite{I,MI,P,T}) the weak ergodicity of
nonhomogeneous Markov process are given in terms of Dobrushin's
ergodicity coefficient \cite{D}. In general case, one may consider
several kinds of convergence \cite{MC}. In \cite{ZI} some sufficient
conditions for weak and strong ergodicity of nonhomogeneous Markov
processes are given and estimates of the rate of convergence are
proved.  Lots of papers were devoted to the investigation of
ergodicity of nonhomogeneous Markov chains (see, for example
\cite{D}-\cite{JI},\cite{T},\cite{Z}).

In the present paper we are going to investigate the $L_1$-weak
ergodicity of nonhomogeneous discrete Markov processes, in general
state spaces, without using Dobrushin's ergodicity coefficient. Note
that the $L_1$-weak ergodicity is weaker than usual weak ergodicity
(see next section). We shall provide necessary and sufficient
conditions for such processes to satisfy the $L_1$-weak ergodicity.
As application of the main result, certain concrete examples are
provided. Note that in \cite{D} similar conditions were found for
nonhomogeneous Markov processes to satisfy weak ergodicity. It is
worth to mention that in \cite{SG}  a necessary and sufficient
condition was found for homogeneous Markov processes to satisfy
$L_1$-ergodicity. Our condition recovers the mentioned condition
when the processes is homogeneous. Moreover, we will provide some
applications of the main result to $L_1$-weak ergodicity of discrete
quadratic stochastic processes which improves the result of
\cite{SGa}. Note that such processes relate to quadratic operators
\cite{K} as Markov processes relate to linear operators. For the
recent review on quadratic operator we refer to \cite{GMR}.

\section{$L_1$-Weak ergodicity}

Let $(X,\mathcal{F},\m)$ be a probability space. In what follows, we
consider the standard $L^1(X,\mathcal{F},\m)$ and
$L^{\infty}(X,\mathcal{F},\m)$ spaces. Note that
$L^1(X,\mathcal{F},\mu)$  can be identified with the space of finite
signed measures on $X$ absolutely continuous with respect to $\m$.
By $\frak{M}$ we denote the set of all probability measures on $X$
which are absolutely continuous w.r.t. $\m$.
 Recall that transition probabilities $P^{[k,m]}(x,A)$, $x\in X$, $A\in\cf$ ($k,n\in\bz_+$)
 form \textit{a nonhomogeneous discrete Markov process (NHDMP)} iff the
following conditions are satisfied:
\begin{enumerate}
\item[1.] for each $k,n$ the function of two variables $P^{[k,n]}(x,A)$ is
a Markov kernel, and it is $\m$-measurable, i.e. $\m(A)=0$ implies
$P^{[k,n]}(x,A)=0$ a.e. on $X$.

\item[2.] one has Kolmogorov-Chapman equation: for every $k\leq
m\leq n$
\begin{equation}\label{KC}
P^{[k,n]}(x,A)=\int P^{[k,m]}(x,dy)P^{[m,n]}(y,A).
\end{equation}
\end{enumerate}

In the sequel, we will deal with $\m$-measurable NHDMP. In this
case, for each $k,n$ such one can define a positive linear
contraction operator on $L^1$ (resp. $L^{\infty}$) denoted by
$P^{[k,n]}_*$ (resp. $P^{[k,n]}$). Namely,

\begin{eqnarray}\label{P*}
&& (P^{[k,n]}_*\n)(A)=\int P^{[k,n]}(x,A)d\n (x), \ \ \ \ \n\in L^1\\[2mm]
\label{P} &&(P^{[k,n]}f)(x)=\int P^{[k,n]}(x,dy)f(y), \ \ \ f\in
L^\infty.
\end{eqnarray}
 It is clear that
$\|P^{[k,n]}_*\n\|_1=\|\n\|_1$ for every positive measure $\n\in
L^1$.

From \eqref{P*} it follows that \eqref{KC} can be rewritten as
follows
$$
P^{[k,n]}_*=P^{[m,n]}_*P^{[k,m]}_*
$$
where $k\leq m\leq n$.

Recall that if for a NHDMP $P^{[k,n]}(x,A)$ one has
$P^{[k,n]}_*=\big(P^{[0,1]}_*\big)^{n-k}$, then such process becomes
{\it homogeneous}, and therefore, it is denoted by $P^{n}(x,A)$.

\begin{defn} A NHDMP $P^{[k,n]}(x,A)$ is said to satisfy
\begin{enumerate}
\item[(i)] the \textit{weak ergodicity}
if for any $k\in\bz_+$ one has
$$\lim_{n\rightarrow\infty}\sup_{x,y\in X}\|P^{[k,n]}(x,\cdot)-P^{[k,n]}(y,\cdot)\|_1=0;$$

\item[(ii)] the \textit{$L_1$-weak ergodicity}
if for any probability measures $\l,\nu\in\frak{M}$ and $k\in\bz_+$
one has
$$\lim_{n\rightarrow\infty}\|P^{[k,n]}_*\l-P^{[k,n]}_*\nu\|_1=0;$$

\item[(iii)] the \textit{strong ergodicity} if there exists a probability
measure $\m_1$ such that for every $k\in\bz_+$ one has
$$
\lim_{n\rightarrow\infty}\sup_{x\in
X}\|P^{[k,n]}(x,\cdot)-\m_1\|_1\to 0;
$$
\item[(iv)] the \textit{$L_1$-strong ergodicity} if there exists a probability
measure $\m_1$ such that for every $k\in\bz_+$ and $\l\in\frak{M}$
one has
$$
\lim_{n\rightarrow\infty}\|P^{[k,n]}_*\l-\m_1\|_1\to 0.
$$
\end{enumerate}
\end{defn}

One can see that the weak (resp. strong) ergodicity implies the
$L_1$-weak (resp. $L_1$-strong) ergodicity. Indeed, let us consider
the following example.

{\sc Example.} Let $X=\{1,2,3,4\}$ and $\m=(1/2,1/2,0,0)$. In this
case, the set $\frak{M}$ coincides with $\{(\a,1-\a,0,0): \ \
\a\in[0,1]\}$. Consider stochastic matrix
\begin{equation*}
\mathbb{P}=\left(
\begin{array}{llll}
p & q & 0 & 0 \\
q & p & 0 & 0\\
0 & 0 & 1 & 0\\
0 & 0 & 0 & 1\\
\end{array}
\right), \ \ \ p\in(0,1), \ p+q=1,
\end{equation*}
which is clearly $\m$-measurable. One can check that for any
$\l\in\frak{M}$ (i.e. $\l=(\a,1-\a,0,0)$, $\a\in[0,1]$) we have
$$
\mathbb{P}^n_*\l\to (1/2,1/2,0,0) \ \ \textrm{as} \ \ n\to\infty,
$$
this means $\mathbb{P}$ satisfies the $L_1$-strong ergodicity. On
the other hand, the matrix $\mathbb{P}$ has another two invariant
measures, i.e.
$$
\m_1=(0,0,1,0), \ \ \m_2=(0,0,0,1)
$$
which implies that $\mathbb{P}$ is not strong ergodic.\\

Therefore, it is natural to find certain necessary and sufficient
conditions for the satisfaction $L_1$-weak ergodicity of NHDMP. So,
in the paper we will deal with $L_1$-weak ergodicity. Note that
historically, one of the most significant conditions for the weak
ergodicity is the Doeblin's Condition (for homogeneous Markov
processes), which is formulated as follows: there exist a
probability measure $\n$, an integer $n_0\in\bn$ and constants
$0<\i<1$, $\d>0$ such that for every $A\in\cf$ if $\n(A)>\i$ then
$$
\inf_{x\in X}P^{n_0}(x,A)\geq\d.
$$
Such a condition does not imply either the aperiodicity or the
ergodicity of the process. In \cite{N} the aperiodicity is studied
by minorization type conditions, i.e. there exist a non-trivial
positive measure $\l$ and $n_0\in\bn$ such that
$$
P^{n_0}(x,A)\geq \l(A), \ \ \forall x\in X, \ \forall A\in\cf.
$$

But this condition is not sufficient for the strong ergodicity. In
\cite{SG} it was introduced a variation of the above condition, i.e.
Condition ($C_0$): there exists a non-trivial positive measure
$\m_0\in L^1$, $\|\m_0\|_1\neq 0$, and for every $\l\in\frak{M}$ one
can find a sequence $\{X_n\}\subset\cf$ with $\m(X\setminus X_n)\to
0$, as $n\to\infty$, and $n_0\in\bn$ such that for all $n\geq n_0$
one has\footnote{Here and in what follows, a given $B\in\cf$ the
measure $\m 1_{B}$ is defined by $\m 1_{B}(Y)=\m(Y\cap B)$ for any
$Y\in\cf$.}
\begin{equation}\label{AA}
P^{n}_*\l\geq\m_01_{X_n},
\end{equation}
where $1_Y$ stands for the indicator function of a set $Y$. It has
been proved that such a condition is necessary and sufficient for
the $L_1$-strong ergodicity of homogeneous processes. In the present
paper we shall introduce a simple variation of the above condition
($C_0$) for NHDMP, and prove that the introduced condition is a
necessary and sufficient for the $L_1$-weak ergodicity. Note that an
other direction of variation of the Doeblin's Condition has been
studied in \cite{DP}.

\section{Main results}

In this section we are going to introduce a simple variation of
condition ($C_0$).

\begin{defn}
We say that a NHDMP $P^{[k,n]}(x,A)$ given on $(X,\cf,\m)$ satisfies
\textit{condition ($C_1$)} if for each $k\in\bz_+$ there exist a
positive measure $\m_k\in L^1$, $\|\m_k\|_1\neq 0$,  and for every
$\d>0$ and $\l,\n\in\frak{M}$ one can find sets $X_k,Y_k\in \cf$
with $\m(X\setminus X_k)<\d$, $\m(X\setminus Y_k)<\d$ and an integer
$n_k\in\bn$ such that
\begin{equation}\label{1C}
P^{[k,k+n_k]}_*\l\geq\m_k1_{X_k}, \ \
P^{[k,k+n_k]}_*\n\geq\m_k1_{Y_k},
\end{equation}
\end{defn}
here as before $1_Y$ stands for the indicator function of a set $Y$.

\begin{rem} In \eqref{C},\eqref{1C} without loss of generality we may assume
that $\|\m_k\|_1<1/2$, otherwise we will replace $\m_k$ with
$\m_k'=\m_k/2$.
\end{rem}

\begin{prop}\label{EP1} Let a NHDMP $P^{[k,n]}(x,A)$ given on $(X,\cf,\m)$.
Then for the following assertions
\begin{enumerate}
\item[(i)] $P^{[k,n]}(x,A)$ satisfies condition ($C_1$);

\item[(ii)] for any $\l,\n\in\frak{M}$ and $k\in\bz_+$ there is a sequence
$\{n_i\}$ such that for all $n\geq K_\ell:=\sum_{i=1}^\ell n_i$
($K_0=k$) one has
\begin{equation}\label{l-m}
\|P^{[k,n]}_*\l-P^{[k,n]}_*\n\|_1=\bigg(\prod_{i=1}^{\ell}\g_{i}\bigg)
\|P^{[K_{\ell},n]}_*\l_\ell-P^{[K_\ell,n]}_*\n_\ell\|_1,
\end{equation}
where $\l_\ell,\n_\ell\in\frak{M}$, and
\begin{equation}\label{gg}
\frac{1}{2}\leq \g_{i}\leq 1-\frac{\|\m_{K_{i-1}}\|_1}{2}, \ \
i=1,\dots,\ell.
\end{equation}
\end{enumerate}
the implication hold true: (i)$\Rightarrow$(ii). \end{prop}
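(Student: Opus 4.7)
The plan is to prove (i)$\Rightarrow$(ii) by induction on $\ell\ge 0$, building recursively a sequence of times $\{n_i\}$, factors $\{\g_i\}$, and pairs of probability measures $\l_i,\n_i\in\frak{M}$ starting from $\l_0=\l$, $\n_0=\n$. At the $i$-th step I use condition $(C_1)$ to extract from $P^{[K_{i-1},K_i]}_*\l_{i-1}$ and $P^{[K_{i-1},K_i]}_*\n_{i-1}$ the same nonnegative minorant, renormalise the two residuals to obtain $\l_i, \n_i\in\frak{M}$, and propagate the resulting identity along the chain using Kolmogorov--Chapman. The key point is that the factor pulled out at each step is a \emph{scalar}: it passes through the linear operator $P^{[K_i,n]}_*$ and through the $L^1$-norm without any loss, so \eqref{l-m} is preserved as a genuine equality rather than an inequality.

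\noindent\textbf{Base case and induction step.} For $\ell=0$ I take $\l_0=\l$, $\n_0=\n$, $K_0=k$; the empty product equals $1$ and \eqref{l-m} is a tautology. Assume \eqref{l-m} at stage $\ell$ with $\l_\ell,\n_\ell\in\frak{M}$. Apply condition $(C_1)$ at time $K_\ell$ to the pair $\l_\ell, \n_\ell$ with a parameter $\d_{\ell+1}>0$ to be fixed in the next paragraph. This delivers $n_{\ell+1}\in\bn$ and sets $X_{K_\ell}, Y_{K_\ell}\in\cf$ whose $\m$-complements are less than $\d_{\ell+1}$ such that
\[
P^{[K_\ell,K_{\ell+1}]}_*\l_\ell\ge\m_{K_\ell}1_{A},\qquad P^{[K_\ell,K_{\ell+1}]}_*\n_\ell\ge\m_{K_\ell}1_{A},
\]
where $A=X_{K_\ell}\cap Y_{K_\ell}$ and $K_{\ell+1}=K_\ell+n_{\ell+1}$. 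Set $m_{\ell+1}=\|\m_{K_\ell}1_A\|_1$. Each left-hand side equals $\m_{K_\ell}1_A$ plus a positive measure of total mass $1-m_{\ell+1}$; normalising those two residuals produces $\l_{\ell+1},\n_{\ell+1}\in\frak{M}$ with
\[
P^{[K_\ell,K_{\ell+1}]}_*\l_\ell - P^{[K_\ell,K_{\ell+1}]}_*\n_\ell = (1-m_{\ell+1})(\l_{\ell+1}-\n_{\ell+1}).
\]
Applying $P^{[K_{\ell+1},n]}_*$ to both sides, invoking Kolmogorov--Chapman in the form $P^{[K_{\ell+1},n]}_*P^{[K_\ell,K_{\ell+1}]}_*=P^{[K_\ell,n]}_*$, and then taking $L^1$-norms gives, for every $n\ge K_{\ell+1}$,
\[
\|P^{[K_\ell,n]}_*\l_\ell - P^{[K_\ell,n]}_*\n_\ell\|_1 = (1-m_{\ell+1})\,\|P^{[K_{\ell+1},n]}_*\l_{\ell+1} - P^{[K_{\ell+1},n]}_*\n_{\ell+1}\|_1,
\]
which combined with the inductive hypothesis is precisely \eqref{l-m} at stage $\ell+1$ with $\g_{\ell+1}:=1-m_{\ell+1}$.

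\noindent\textbf{Bounds on $\g$ and main obstacle.} The lower bound $\g_{\ell+1}\ge 1/2$ in \eqref{gg} is free: by the Remark one may assume $\|\m_{K_\ell}\|_1<1/2$, whence $m_{\ell+1}\le\|\m_{K_\ell}\|_1<1/2$. The delicate point is the upper bound $\g_{\ell+1}\le 1-\|\m_{K_\ell}\|_1/2$, i.e.\ $\|\m_{K_\ell}1_{X\setminus A}\|_1\le\|\m_{K_\ell}\|_1/2$, and this is the step where the freedom to choose $\d_{\ell+1}$ in $(C_1)$ is actually needed. I would exploit the fact that $\m_{K_\ell}$ is absolutely continuous with respect to $\m$: pick $\eta>0$ (depending only on $\m_{K_\ell}$) so that $\m(B)<\eta$ forces $\m_{K_\ell}(B)\le\|\m_{K_\ell}\|_1/2$, and then fix $\d_{\ell+1}<\eta/2$; this gives $\m(X\setminus A)<2\d_{\ell+1}<\eta$ and hence the required estimate. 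Calibrating the $(C_1)$-parameter against the $L^1$-modulus of absolute continuity of $\m_{K_\ell}$ is the only non-routine ingredient of the argument; everything else is bookkeeping around the Kolmogorov--Chapman decomposition.
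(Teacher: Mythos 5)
Your proposal is correct and follows essentially the same route as the paper's proof: at each inductive stage you invoke condition $(C_1)$, intersect the two sets, peel off the common minorant $\m_{K_\ell}1_{Z_{\ell+1}}$, renormalise the two residuals (which have equal mass, so the factor is a scalar and the $L^1$-identity \eqref{l-m} is preserved exactly), and obtain the upper bound in \eqref{gg} by calibrating $\d_{\ell+1}$ against the absolute continuity of $\m_{K_\ell}$ with respect to $\m$, with the lower bound coming from the Remark's normalisation $\|\m_{K_\ell}\|_1<1/2$. The only differences are cosmetic (your $m_{\ell+1}=\|\m_{K_\ell}1_A\|_1$ versus the paper's $\g_{\ell+1}=1-\int\m_{K_\ell}1_{Z_{\ell+1}}d\m$, and your cleaner base case at $\ell=0$).
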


\begin{proof} Take any $\l,\n\in\frak{M}$ and fix
$k\in\bz_+$. Let us prove \eqref{l-m} by induction. Due to condition
($C_1$) there is a measure $\m_k$. Then according to absolute
continuity of Lebesgue integral, there is $\d_1>0$ such that for any
$Z\in\cf$ with $\m(Z)<2\d_1$ one has
\begin{equation}\label{Z}
\int\m_k1_{Z}d\m<\frac{\|\m_k\|_1}{2}.
\end{equation}
Now again due to condition ($C_1$) there are $X_1, Y_1\subset\cf$
and $n_1\in\bn$ such that one has $\max\{\m(X\setminus
X_1),\mu(X\setminus Y_1)\}<\d$ and
\begin{equation}\label{2C}
P^{[k,k+n_1]}_*\l\geq\m_k1_{X_1}, \ \
P^{[k,k+n_1]}_*\n\geq\m_k1_{Y_{1}}.
\end{equation}
Denoting $Z_{1}=X_{1}\cap Y_{1}$, one has $\m(X\setminus
Z_{1})<2\d$, and from \eqref{2C} we find
\begin{equation}\label{3C}
P^{[k,k+n_1]}_*\l\geq\m_k1_{Z_{1}}, \ \
P^{[k,k+n_1]}_*\n\geq\m_k1_{Z_{1}}.
\end{equation}

It follows from \eqref{3C} that
\begin{eqnarray}\label{l-z}
\|P^{[k,k+n_1]}_*\l-\m_k1_{Z_{1}}\|_1&=&\int\big(P^{[k,k+n_1]}_*\l-\m_k1_{Z_{1}}\big)d\m\nonumber\\[2mm]
&=&\int P^{[k,k+n_1]}_*\l d\m-\int\m_01_{Z_{1}}d\m\nonumber\\[2mm]
&=&1-\int\m_01_{Z_{1}}d\m\nonumber\\[2mm]
&=&\int P^{[k,k+n_1]}_*\n d\m-\int\m_01_{Z_{1}}d\m\nonumber\\[2mm]
&=&\|P^{[k,k+n_1]}_*\n-\m_01_{Z_{1}}\|_1.
\end{eqnarray}
Therefore, let us denote
$$
\g_{1}=\|P^{[k,k+n_1]}_*\l-\m_k1_{Z_{1}}\|_1.
$$

One can see that
\begin{eqnarray}\label{1g}
1-\int\m_k1_{Z_{1}}d\m\geq 1-\int\m_kd\m\geq \frac{1}{2}.
\end{eqnarray}

Due to $\m(X\setminus Z_{1})<2\d_1$ from \eqref{Z} we have
$$
\frac{1}{2}\int\m_kd\m\geq \int\m_k 1_{X\setminus
Z_{1}}d\m=\int\m_kd\m-\int\m_k1_{Z_{1}}d\m
$$
which yields
$$
\int\m_k1_{Z_{1}}d\m\geq \frac{\|\m_k\|_1}{2}.
$$
Therefore, one finds
\begin{eqnarray}\label{2g}
 1-\int\m_k1_{Z_{1}}d\m\leq 1-\frac{\|\m_k\|_1}{2}.
\end{eqnarray}
Hence, from \eqref{1g},\eqref{2g} we infer
$$
\frac{1}{2}\leq \g_{1}\leq 1-\frac{\|\m_k\|_1}{2}
$$

Thus, at $n\geq k+n_1$ we obtain
\begin{eqnarray*}
\|P^{[k,n]}_*\l-P^{[k,n]}_*\n\|_1&=&\|P^{[k+n_1,n]}_*\big(P^{[k,k+n_1]}_*\l-\m_k1_{Z_{1}}\big)-
P^{[k+n_1,n]}_*\big(P^{[k,k+n_1]}_*\n-\m_k1_{Z_{1}}\big)\|_1\\[2mm]
&=&\g_{1}\|P^{[k+n_1,n]}_*\l_1-P^{[k+n_1,n]}_*\n_1\|_1,
\end{eqnarray*}
where
\begin{eqnarray*}
&&\l_1=\frac{1}{\g_{1}}\big(P^{[k,k+n_1]}_*\l-\m_k1_{Z_{1}}\big)\\[2mm]
&&\n_1=\frac{1}{\g_{1}}\big(P^{[k,k+n_1]}_*\n-\m_k1_{Z_{1}}\big).
\end{eqnarray*}
It is clear that $\l_1,\n_1\in\frak{M}$, so we have proved
\eqref{l-m} for $\ell=1$.

Now assume that \eqref{l-m} holds for $i=\ell$, i.e. there are
numbers $\{n_i\}_{i=1}^{\ell}$ such that for any $n\geq
K_\ell:=\sum\limits_{i=1}^\ell n_i$ one has
\begin{equation}\label{l-m1}
\|P^{[k,n]}_*\l-P^{[k,n]}_*\n\|_1=\bigg(\prod_{i=1}^{\ell}\g_{i}\bigg)
\|P^{[K_{\ell},n]}_*\l_\ell-P^{[K_\ell,n]}_*\n_\ell\|_1,
\end{equation}
where $\l_\ell,\n_\ell\in\frak{M}$, and
$$
\frac{1}{2}\leq \g_{i}\leq 1-\frac{\|\m_{K_{i-1}}\|_1}{2}, \ \
i=1,\dots,\ell.
$$

Let us prove \eqref{l-m} at $i=\ell+1$. According to condition
($C_1$) there is a positive measure $\m_{K_\ell}$. One can find
$\d_{\ell+1}>0$ such that for any $Z\in\cf$ with
$\m(Z)<2\d_{\ell+1}$ one has
\begin{equation}\label{Z}
\int\m_{K_\ell}1_{Z}d\m<\frac{\|\m_{K_\ell}\|_1}{2}.
\end{equation}

For $\l_\ell$ and $\n_\ell$ from condition ($C_1$) one finds
$X_{\ell+1},Y_{\ell+1}\subset\cf$ and $n_{\ell+1}\in\bn$ such that
one has
$$
\max\big\{\m(X\setminus X_{\ell+1}),\m(X\setminus
Y_{\ell+1})\big\}<\d_{\ell+1}
$$
and
\begin{equation*}
P^{[K_\ell,K_\ell+n_{\ell+1}]}_*\l_\ell\geq\m_{K_\ell}1_{X_{\ell+1}},
\ \
P^{[K_\ell,K_\ell+n_{\ell+1}]}_*\n_\ell\geq\m_{K_\ell}1_{Y_{\ell+1}}.
\end{equation*}
Denote $Z_{\ell+1}=X_{\ell+1}\cap Y_{\ell+1}$, then one can see that
$\m(X\setminus Z_{\ell+1})<2\d_{\ell+1}$ and
\begin{equation}\label{4C}
P^{[K_\ell,K_\ell+n_{\ell+1}]}_*\l_\ell\geq\m_{K_\ell}1_{Z_{\ell+1}},
\ \
P^{[K_\ell,K_\ell+n_{\ell+1}]}_*\n_\ell\geq\m_{K_\ell}1_{Z_{\ell+1}}.
\end{equation}

Denoting $K_{\ell+1}=K_{\ell}+n_{\ell+1}$, and similarly to
\eqref{l-z} we get

\begin{eqnarray*}
\|P^{[K_\ell,K_{\ell+1}]}_*\l_{\ell}-\m_{K_\ell} 1_{Z_{\ell+1}}\|_1
&=&
\|P^{[K_\ell,K_{\ell+1}]}_*\n_\ell-\m_{K_\ell}1_{Z_{\ell+1}}\|_1\\[2mm]
&=&1-\int\m_{K_\ell} 1_{Z_{\ell+1}}d\m
\end{eqnarray*}

Denote
$$
\g_{\ell+1}=1-\int\m_{K_\ell} 1_{Z_{\ell+1}}d\m,
$$
hence using $\m(X\setminus Z_{\ell+1})<2\d_{\ell+1}$ and the same
argument as \eqref{1g},\eqref{2g} one finds
$$
\frac{1}{2}\leq \g_{\ell+1}\leq 1-\frac{\|\m_{K_\ell}\|_1}{2}.
$$

Now at $n\geq K_{\ell+1}$ we get
\begin{eqnarray*}
\|P^{[K_\ell,n]}_*\l_\ell-P^{[K_\ell,n]}_*\n_\ell\|_1&=&
\big\|P^{[K_{\ell+1},n]}_*\big(P^{[K_\ell,K_{\ell+1}]}_*\l_\ell-\m_{K_\ell}1_{Z_{\ell+1}}\big)\\[2mm]
&&-
P^{[K_{\ell+1},n]}_*\big(P^{[K_\ell,K_{\ell+1}]}_*\n_\ell-\m_{K_\ell}1_{Z_{\ell+1}}\big)\big\|_1\\[2mm]
&=&\g_{\ell+1}\|P^{[K_{\ell+1},n]}_*\l_{\ell+1}-P^{[K_{\ell+1},n]}_*\n_{\ell+1}\|_1,
\end{eqnarray*}
where
\begin{eqnarray*}
&&\l_{\ell+1}=\frac{1}{\g_{\ell+1}}\big(P^{[K_\ell,K_{\ell+1}]}_*\l_\ell-\m_{K_\ell}
1_{Z_{\ell+1}}\big)\\[2mm]
&&\n_{\ell+1}=\frac{1}{\g_{\ell+1}}\big(P^{[K_\ell,K_{\ell+1}]}_*\n_\ell-\m_{K_\ell}
1_{Z_{\ell+1}}\big).
\end{eqnarray*}
It is clear that $\l_{\ell+1},\n_{\ell+1}\in\frak{M}$. Thus, taking
into account \eqref{l-m1} we derive the desired equality.
\end{proof}

Next theorem shows that condition ($C_1$) is equivalent to the
satisfaction of the $L_1$-weak ergodicity of NHDMP.

\begin{thm}\label{EP2} Let a NHDMP $P^{[k,n]}(x,A)$ be given on $(X,\cf,\m)$.
Then the following assertions are equivalent
\begin{enumerate}
\item[(i)] $P^{[k,n]}(x,A)$ satisfies condition ($C_1$) with
\begin{equation}\label{gg2}
\sum_{n=1}^\infty\|\m_{k_n}\|_1=\infty
\end{equation}
for any increasing subsequence $\{k_n\}$ of $\bn$.

\item[(ii)] $P^{[k,n]}(x,A)$ satisfies the $L_1$-weak ergodicity.

\end{enumerate}
\end{thm}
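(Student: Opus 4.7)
My plan is to establish the two implications separately: (i)$\Rightarrow$(ii) is a quick deduction from Proposition~\ref{EP1} together with the divergence hypothesis, while (ii)$\Rightarrow$(i) requires a genuine construction of the measures $\m_k$ from the $L_1$-weak ergodic behavior itself.

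For (i)$\Rightarrow$(ii), assume condition $(C_1)$ together with \eqref{gg2}, fix $\l,\n\in\frak{M}$ and $k\in\bz_+$, and invoke Proposition~\ref{EP1} to obtain a sequence $\{n_i\}$ so that for every $\ell$ and every $n\geq K_\ell$ the identity \eqref{l-m} holds with scalars $\g_i$ satisfying \eqref{gg}. Using the trivial bound $\|P^{[K_\ell,n]}_*\l_\ell-P^{[K_\ell,n]}_*\n_\ell\|_1\leq\|\l_\ell\|_1+\|\n_\ell\|_1=2$, the representation gives
$$\|P^{[k,n]}_*\l-P^{[k,n]}_*\n\|_1\leq 2\prod_{i=1}^\ell\Bigl(1-\tfrac{\|\m_{K_{i-1}}\|_1}{2}\Bigr)\leq 2\exp\Bigl(-\tfrac12\sum_{i=1}^\ell\|\m_{K_{i-1}}\|_1\Bigr),$$
the last step being $1-x\leq e^{-x}$. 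Since each $n_i\geq 1$, the indices $\{K_{i-1}\}_{i\geq 1}$ form a strictly increasing subsequence of $\bn$, so by \eqref{gg2} the exponent diverges and the whole bound tends to $0$ as $\ell\to\infty$. Letting $n\to\infty$ and choosing $\ell$ with $K_\ell\leq n$ arbitrarily large then yields $\|P^{[k,n]}_*\l-P^{[k,n]}_*\n\|_1\to 0$, which is the $L_1$-weak ergodicity.

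For the converse (ii)$\Rightarrow$(i), I would construct $\m_k$ as (half of) a suitable pointwise lower envelope of the orbit densities $\{dP^{[k,k+n]}_*\m/d\m\}_n$. Passing, if necessary, to a subsequence $\{N_j\}$ so that $h_k(x):=\liminf_j (dP^{[k,k+N_j]}_*\m/d\m)(x)$ is $\m$-a.e.\ well-defined and nonzero, I set $\m_k=\tfrac12 h_k\cdot\m$. Given $\l\in\frak{M}$ and $\d>0$, one first chooses $c>0$ so small that $\m\{h_k<c\}<\d/2$; then the $L^1$-closeness $\|P^{[k,k+n]}_*\l-P^{[k,k+n]}_*\m\|_1\to 0$, supplied by (ii), combined with a Chebyshev-type estimate on the set $\{h_k\geq c\}$, allows one to pick $n_k$ so that $X_k:=\{P^{[k,k+n_k]}_*\l\geq\m_k\}$ has $\m(X\setminus X_k)<\d$; the same argument with $\n$ produces $Y_k$. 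The divergence \eqref{gg2} would be handled by contradiction: were $\sum\|\m_{k_n}\|_1<\infty$ for some increasing $\{k_n\}$, one could exhibit a pair $\l,\n\in\frak{M}$ whose $L^1$-distance under $P^{[k,n]}_*$ stays bounded away from $0$, violating (ii).

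The main obstacle lies in this converse direction: for a general NHDMP the orbit $\{P^{[k,k+n]}_*\m\}_n$ need not converge in $L^1$, and the naive $\liminf$ $h_k$ could vanish $\m$-a.e.\ (for example, when mass concentrates on shrinking sets). Guaranteeing $\|h_k\|_1>0$ while simultaneously ensuring that the resulting $\m_k$ majorizes $P^{[k,k+n_k]}_*\l$ on a $\m$-large set uniformly over $\l\in\frak{M}$ will likely demand a Koml\'os- or Dunford--Pettis-type compactness extraction, together with a careful balancing of the parameters $c$, $\d$ and the rate of $L^1$-convergence coming from (ii).
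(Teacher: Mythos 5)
The first half of your argument, (i)$\Rightarrow$(ii), is correct and is essentially the paper's own proof: invoke Proposition \ref{EP1}, bound the residual norm by $2$, and use $1-x\le e^{-x}$ together with the observation that $\{K_{i-1}\}$ is an increasing subsequence so that \eqref{gg2} forces the product to vanish. No issues there.

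The converse direction, however, contains two genuine gaps. First, as you yourself concede, the lower envelope $h_k=\liminf_j dP^{[k,k+N_j]}_*\m/d\m$ can vanish $\m$-a.e.\ (densities of the form $n1_{[0,1/n]}$ already defeat any $\liminf$ or Koml\'os--Ces\`aro extraction), and neither Dunford--Pettis (no uniform integrability is available) nor Koml\'os repairs this; so the construction of $\m_k$ is not actually carried out. The paper sidesteps the asymptotic envelope entirely: it fixes a reference measure $\m_0\in\frak{M}$, uses (ii) only at a \emph{single finite time} $n_k$ at which $\|(P^{[k,k+n_k]}_*\l-P^{[k,k+n_k]}_*\m_0)1_{Y_{n_k}}\|_\infty<\varepsilon/2$ with $\m(X\setminus Y_{n_k})<\varepsilon$, sets $\n_k=P^{[k,k+n_k]}_*\m_0$ and $\m_k=\tfrac12\n_k1_{A_k}$ with $A_k=\{\n_k\ge\varepsilon/2\}$, and reads off $P^{[k,k+n_k]}_*\l\ge\m_k1_{Y_{n_k}}$ pointwise; no compactness is needed. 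Second, your plan to obtain \eqref{gg2} ``by contradiction'' is logically invalid: if the particular measures $\m_k$ you constructed happened to satisfy $\sum\|\m_{k_n}\|_1<\infty$, that would not contradict (ii), because condition ($C_1$) without \eqref{gg2} does not preclude $L_1$-weak ergodicity (one can always shrink the $\m_k$ and keep ($C_1$) true). What is required -- and what the paper's construction delivers -- is a direct uniform lower bound on $\|\m_k\|_1$: since $\n_k$ is a probability measure and $\n_k<\varepsilon/2$ off $A_k$, one gets $\|\m_k\|_1=\tfrac12\n_k(A_k)\ge\tfrac12(1-\varepsilon/2)$, so every subsequence sum diverges automatically. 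Without such a quantitative bound the divergence condition is simply not established.
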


\begin{proof} (i)$\Rightarrow$ (ii).
Then due to Proposition \ref{EP1} there is a subsequence
$\{K_\ell\}$ such that
\begin{equation}\label{l-m2}
\|P^{[k,n]}_*\l-P^{[k,n]}_*\n\|_1=\bigg(\prod_{i=1}^{\ell}\g_{i}\bigg)
\|P^{[K_{\ell},n]}_*\l_\ell-P^{[K_\ell,n]}_*\n_\ell\|_1,
\end{equation}
where $\l_\ell,\n_\ell\in\frak{M}$. Now from \eqref{gg} one gets
\begin{equation*}
\|P^{[k,n]}_*\l-P^{[k,n]}_*\n\|_1\leq 2\prod_{i=1}^{\ell} \bigg(
1-\frac{\|\m_{K_{i-1}}\|_1}{2}\bigg)
\end{equation*}

According to \eqref{gg2} we get the desired assertion.

Now consider the implication (ii)$ \Rightarrow$ (i). Fix $1>\i>0$.
Then given $k\in\bn$ and $\l,\m_0\in\frak{M}$, (here $\m_0$ is
fixed) one has
\begin{equation*}
\|P^{[k,n]}_*\l-P^{[k,n]}_*\m_0\|_1\to 0 \ \ \textrm{as} \ \
n\to\infty.
\end{equation*}
Then there is a sequence $\{Y_n\}\subset\cf$ such that
$\m(X\setminus Y_n)\to 0$, as $n\to\infty$, and
$$
\|(P^{[k,n]}_*\l-P^{[k,n]}_*\m_0)1_{Y_n}\|_{\infty}\to 0 \ \
\textrm{as} \ \ n\to\infty.
$$
Therefore, there exists an $n_k\in\bn$ such that $\m(X\setminus
Y_{n_k})<\i$ and
\begin{equation}\label{EP3}
\|(P^{[k,k+n_k]}_*\l-P^{[k,k+n_k]}_*\m_0)1_{Y_{n_k}}\|_{\infty}<\frac{\i}{2}
\end{equation}
Now denote $\n_k=P^{[k,k+n_k]}_*\m_0$.  Hence, from \eqref{EP3} we
get
\begin{eqnarray*}
P^{[k,k+n_k]}_*\l&\geq& P^{[k,k+n_k]}_*\l 1_{Y_{n_k}}\\[2mm]
&\geq& \n_k1_{Y_{n_k}}-\frac{\i}{2} 1_{Y_{n_k}}\\[2mm]
&\geq &\m_k1_{Y_{n_k}},
\end{eqnarray*}
where
$$
\m_k=\frac{1}{2}\n_k1_{A_k}, \ \ A_k=\bigg\{x\in X:\ \n_k(x)\geq
\frac{\i}{2}\bigg\}.
$$

Since $\n_k$ is a probability measure, therefore, we have
$0<\|\m_k\|_1\leq 1/2$, so
$$
1-\frac{\|\m_k\|_1}{2}\geq\frac{3}{4}.
$$
Hence, this completes the proof.
\end{proof}

If one takes $n_k=k+1$ in condition $C_1$, then we get the following

\begin{cor}\label{EP21} Let $P^{[k,n]}(x,A)$ be a NHDMP on
$(X,\cf,\m)$. If for each $k\in\bz_+$ there exist a positive measure
$\m_k\in L^1$, $\|\m_k\|_1\neq 0$,  and for every $\d>0$ and
$\l\in\frak{M}$ one can find a set $X_k\in \cf$ with $\m(X\setminus
X_k)<\d$ such that
\begin{equation}\label{1C1}
P^{[k,k+1]}_*\l\geq\m_k1_{X_k},
\end{equation}
with
\begin{equation}\label{1gg2}
\sum_{n=1}^\infty\|\m_{n}\|_1=\infty
\end{equation}
then the NHDMP satisfies the $L_1$-weak ergodicity.
\end{cor}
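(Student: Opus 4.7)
The strategy is to reduce the corollary to Proposition~\ref{EP1} by recognizing that \eqref{1C1} is simply condition $(C_1)$ specialized to $n_k=1$ for every $k$. This specialization is significant because it forces the sequence of indices $\{K_\ell\}$ produced by the construction in Proposition~\ref{EP1} to be the deterministic sequence $\{k+\ell\}_{\ell\geq 0}$, so that only the weaker summability hypothesis \eqref{1gg2} is needed rather than the subsequence-uniform condition \eqref{gg2} demanded by Theorem~\ref{EP2}.

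First I would fix $\l,\n\in\frak{M}$ and $k\in\bz_+$, pick any tolerance $\d>0$, and apply \eqref{1C1} twice (once to $\l$ and once to $\n$) to produce sets $X_k,Y_k\in\cf$ satisfying $\m(X\setminus X_k)<\d$, $\m(X\setminus Y_k)<\d$, and
\[
P^{[k,k+1]}_*\l\geq\m_k 1_{X_k},\qquad P^{[k,k+1]}_*\n\geq\m_k 1_{Y_k}.
\]
This is exactly \eqref{1C} with $n_k=1$, so the NHDMP satisfies condition $(C_1)$ and Proposition~\ref{EP1} applies with the forced choice $n_i=1$ for all $i$, which gives $K_\ell=k+\ell$.

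Next I would combine the identity \eqref{l-m} with the trivial bound $\|P^{[K_\ell,n]}_*\l_\ell-P^{[K_\ell,n]}_*\n_\ell\|_1\leq\|\l_\ell\|_1+\|\n_\ell\|_1=2$ and the upper estimate \eqref{gg} on each $\g_i$, to obtain, for every $n\geq k+\ell$,
\[
\|P^{[k,n]}_*\l-P^{[k,n]}_*\n\|_1\leq 2\prod_{i=1}^{\ell}\Bigl(1-\frac{\|\m_{k+i-1}\|_1}{2}\Bigr).
\]

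To close, I would apply the elementary bound $\log(1-t)\leq -t$ with $t=\|\m_{k+i-1}\|_1/2$, so that hypothesis \eqref{1gg2} gives $\sum_{i\geq 1}\|\m_{k+i-1}\|_1/2=\infty$ and hence the infinite product above vanishes. Sending $\ell\to\infty$ (and consequently $n\to\infty$) yields $\lim_{n\to\infty}\|P^{[k,n]}_*\l-P^{[k,n]}_*\n\|_1=0$, which is exactly the $L_1$-weak ergodicity. No step here is genuinely difficult; the only point worth emphasizing is the observation made at the outset, that the uniform choice $n_k=1$ makes the index sequence $\{K_\ell\}$ fully deterministic and therefore renders the weaker summability assumption \eqref{1gg2} sufficient in place of the subsequence-uniform \eqref{gg2}.
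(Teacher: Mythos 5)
Your proof is correct and is essentially the argument the paper intends: the paper states the corollary as an immediate consequence of taking $n_k=1$ in condition ($C_1$), and you have simply spelled out that reduction via Proposition~\ref{EP1}. Your explicit observation that the deterministic index sequence $K_\ell=k+\ell$ is what allows the weaker hypothesis \eqref{1gg2} to replace the subsequence condition \eqref{gg2} of Theorem~\ref{EP2} is exactly the point that makes the corollary work, and you handle it correctly.
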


Now let us consider a nonhomogeneous version of condition $(C_0)$.
Namely, a NHDMP $P^{[k,n]}(x,A)$ given on $(X,\cf,\m)$ is said to
satisfy \textit{condition ($C_2$)} if for each $k\in\bz_+$ there
exists a positive measure $\m_k\in L^1$, $\|\m_k\|_1\neq 0$, and for
every $\l\in\frak{M}$ one can find a sequence
$\{X^{(k)}_n\}\subset\cf$ with $\m(X\setminus X^{(k)}_n)\to 0$, as
$n\to\infty$, and $n_0(\l,k)\in\bn$ such that for all $n\geq
n_0(\l,k)$ one has
\begin{equation}\label{C}
P^{[k,n]}_*\l\geq\m_k1_{X^{(k)}_n};
\end{equation}

From Proposition \eqref{EP1} and Theorem \ref{EP2} we immediately
see that condition ($C_2$) with \eqref{gg2} is sufficient for the
$L_1$-weak ergodicity. One the other hand, if NHDMP becomes
homogeneous then condition ($C_2$) reduces to $C_0$, but in
\cite{SG} it has been proved that the last condition (i.e.
\eqref{AA}) is equivalent to the $L_1$-strong ergodicity of the
homogeneous process. Therefore, one can formulate the following:\\

{\bf Problem}. Is Condition ($C_2$) with \eqref{gg2} necessary for
the $L_1$-weak ergodicity?

\section{Applications}

In this section we provide some application of the main result for
concrete cases.

\subsection{Discrete case}
 Let us consider a countable state space NHDMP. Namely, let
$X=\bn$ and $\m$ be the Poisson measure. Then NHDMP can be given in
a form of stochastic matrices $\{p^{[k,n]}_{i,j}\}_{i,j\in\bn}$.

\begin{thm}\label{a1} Let $\{p^{[k,n]}_{i,j}\}_{i,j\in\bn}$ be a NHDMP. If
there exists a sequence $\{\l_n\}_{\in\bn}$, $0\leq\l_n\leq 1$
satisfying
\begin{equation}\label{ll}
\sum_{n=1}^\infty(1-\l_n)=\infty
\end{equation}
and such that for some sequence of states $\{n_k\}$
\begin{equation}\label{pl}
p^{[k-1,k]}_{i,n_{k}}\geq \l_k \ \ \textrm{for all}\ \ i,k\in\bn,
\end{equation}
then the NHDMP satisfies the $L_1$-weak ergodicity.
\end{thm}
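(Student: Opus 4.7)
The plan is to derive Theorem~\ref{a1} from Corollary~\ref{EP21} by writing down, for each $k$, the Doeblin-type lower-bound density $\m_k\in L^1(\m)$ produced by the one-step hypothesis~\eqref{pl}, and then invoking the divergence condition~\eqref{1gg2}.

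First I would translate \eqref{pl} into $L^1(\m)$-language. Since $\m$ is the Poisson measure on $\bn$ (so $\m(\{i\})>0$ for all $i$), every $\l\in\mathfrak{M}$ has a density $f_\l(i)=\l(\{i\})/\m(\{i\})$, and a direct transposed-kernel computation gives
\[
(P^{[k-1,k]}_* f_\l)(j) \,=\, \frac{1}{\m(\{j\})}\sum_{i\in\bn} p^{[k-1,k]}_{i,j}\, f_\l(i)\, \m(\{i\}).
\]
Specialising to $j=n_k$, bound \eqref{pl} together with $\sum_i f_\l(i)\m(\{i\})=1$ yields the uniform lower bound $(P^{[k-1,k]}_* f_\l)(n_k)\ge \l_k/\m(\{n_k\})$ that depends only on $k$, not on $\l$.

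Second I would set
\[
\m_k \,:=\, \frac{\l_{k+1}}{\m(\{n_{k+1}\})}\,\mathbf{1}_{\{n_{k+1}\}}\,\in\, L^1(\m),
\]
so that $\|\m_k\|_1=\l_{k+1}$ and the inequality from Step~1 reads $P^{[k,k+1]}_*\l\ge \m_k$ pointwise, for every $\l\in\mathfrak{M}$. Taking $X_k:=X$ makes $\m(X\setminus X_k)=0<\d$ for every $\d>0$, so the one-step lower bound \eqref{1C1} of Corollary~\ref{EP21} is satisfied with these data; the theorem would then follow at once from \eqref{1gg2}, which in this setup reads $\sum_n\|\m_n\|_1=\sum_n\l_n=\infty$.

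The \emph{main obstacle} is precisely this last divergence step: as literally worded, the hypothesis \eqref{ll} of Theorem~\ref{a1} is the complementary sum $\sum(1-\l_n)=\infty$, which does \emph{not} imply $\sum\l_n=\infty$; indeed $\l_n\equiv 0$ satisfies \eqref{pl} and \eqref{ll} vacuously yet affords no contraction whatsoever, so the theorem cannot hold as literally stated. I would therefore read \eqref{ll} as a typographical misprint for $\sum_{n=1}^\infty\l_n=\infty$ — the natural Doeblin-type divergence at the level of the coupling coefficient $\l_n$ that \eqref{pl} actually supplies — and under that correction the reduction above applies verbatim and Corollary~\ref{EP21} delivers the $L_1$-weak ergodicity.
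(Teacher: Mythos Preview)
Your approach is essentially identical to the paper's: define the point-mass minorizing measure concentrated at the distinguished state with total mass $\lambda_k$, take $X_k=X$, and invoke Corollary~\ref{EP21}. The only cosmetic difference is that you work explicitly with densities against the Poisson reference measure and carry the index shift $k\mapsto k+1$, whereas the paper writes the minorizing object directly as a measure $\mu^{(k)}$ with $\mu^{(k)}(\{n_k\})=\lambda_k$.

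Your diagnosis of \eqref{ll} is also correct and worth recording: the paper's own proof ends by ``taking into account \eqref{ll}, from Corollary~\ref{EP21} we get the desired assertion,'' but Corollary~\ref{EP21} requires \eqref{1gg2}, namely $\sum_n\|\mu_n\|_1=\sum_n\lambda_n=\infty$, not $\sum_n(1-\lambda_n)=\infty$. So the paper commits exactly the slip you flagged, and your counterexample $\lambda_n\equiv 0$ shows the statement is false as printed; the intended hypothesis is indeed $\sum_n\lambda_n=\infty$.
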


\begin{proof} Now we show that the process satisfies the condition
($C_1$). Indeed, for each $k\in\bz_k$ we first define a measure
$\m^{(k)}$ on $X$ as follows:
\begin{equation*}
\m^{(k)}_i=\left\{
\begin{array}{ll}
\l_k, \ \  i=n_k\\[2mm]
0, \ \ \ \ i\neq n_k
\end{array}
\right.
\end{equation*}

It is clear that $\|\m^{(k)}\|_1\neq 0$. From \eqref{pl} it follows
that
\begin{equation}\label{pm}
p^{[k-1,k]}_{i,j}\geq \m^{(k)}_j, \ \ \textrm{for all}  \ \
i,j\in\bn.
\end{equation}

Now take any $\n\in\frak{M}$ and each $k\in\bz_+$ we put $X_k=X$,
then from \eqref{pm} one finds
$$
P^{[k-1,k]}_*\n\geq\m^{(k)}  \ \ \textrm{for all}  \ \ k\in\bn.
$$

Hence, the condition ($C_1$) is satisfied. So, taking into account
\eqref{ll}, from Corollary \ref{EP21} we get the desired assertion.
\end{proof}

We note that the proved theorem extends a result of \cite{GAM,P}.

{\sc Example.} Let us consider more concrete examples. Assume that
the transition probability $p_{ij}^{[k,k+1]}$ is defined by
\begin{equation}\label{11}
p_{ij}^{[k,k+1]}=q_{ij}^{(k)}\l_{k,j}+r_{k,i}\d_{ij}, \ \
i,j\in\bn,\ k\in\bn,
\end{equation}
here $\l_{k,j}$,$q_{ij}^{(k)}$, $r_{k,i}$ are positive numbers with
the following constrains
\begin{equation}\label{12}
\sum_{j=1}^\infty(q_{ij}^{(k)}\l_{k,j}+r_{k,i}\d_{ij})=1, \ \ \
\textrm{for all}  \ i\in\bn.
\end{equation}

It is clear that $p^{[k,k+1]}_{ik}\geq \l_{k,k}q_{i,k}^{(k)}$. Now
assume that
$$\inf\{q_{ik}^{(k)}: i\in\bn\}:=\gamma_k>0
$$
and
$$
\sum_{k=1}^\infty(1-\l_{k,k}\g_k)=\infty.
$$

Then one can see that $p^{[k,k+1]}_{ik}\geq \l_{k,k}\g_k$, this
means that conditions of Theorem \ref{a1} are satisfied with
$n_k=k$, $\l_k=\l_{k,k}\g_k$. Hence, the defined NHDMP is $L_1$-weak
ergodic.\\

Now consider more exact values of $\l_{k,j}$, $q_{ij}^{(k)}$,
$r_{k,i}$.

Define
\begin{equation}\label{13}
r_{k,i}=\frac{1}{k+i}, \ \ \ \l_{k,j}= \left\{
\begin{array}{lll}
0, \ \ 1\leq j \leq k-2 \ \ \textrm{or} \ \  j\geq k+1\\
\a_k, \ \ j=k-1\\
\b_{k}, \ \ j=k
\end{array}
\right. \end{equation} Note that $\a_k,\b_k$ will be chosen later
on.

Let $q_{ik}^{(k)}=\b_k$ for all $i\in\bn$, and $q_{ij}^{(k)}=0$ for
every $1\leq j\leq k-2$ and $j\geq k+1$. Now define
$q_{i,k-1}^{(k)}$ from the equality \eqref{12} as follows
$$
\a_kq_{i,k-1}^{(k)}+\b_k^2+r_{k,i}=1
$$
which implies that \begin{equation}\label{14}
q_{i,k-1}^{(k)}=\frac{1}{\a_k}\big(1-r_{k,i}-\b_k^2)
\end{equation}

Now choose $\a_k$ and $\b_k$ as follows
\begin{equation}\label{15}
\a_k=\frac{1}{k}, \ \ \b_k=\sqrt{\frac{k-1}{k}}, \ \ k\in\bn.
\end{equation}

Then from \eqref{13}-\eqref{15} one finds
$$
q_{i,k-1}^{(k)}=\frac{i}{k+i}.
$$

It is clear that $\g_k=\b_k$, therefore, from \eqref{13},\eqref{15}
one gets
$$
\sum_{k=1}^\infty(1-\l_{k,k}\g_k)=\sum_{k=1}^\infty(1-\b_k^2)=
\sum_{k=1}^\infty\frac{1}{k}=\infty.
$$

Hence, due to Theorem \ref{a1} the following NHDMP defined by
\begin{equation*}
p^{[k,k+1]}_{ij}= \left\{
\begin{array}{lll}
\frac{\d_{ij}}{k+i}, \  \qquad \quad\  \ \ \ \ \ \ 1\leq j \leq k-2
\ \ \textrm{or} \ \ j\geq
k+1,\\[2mm]
\frac{1}{k+i}\big(\frac{i}{k}+\d_{i,k-1}\big), \ \ j=k-1,\\[2mm]
\frac{k-1}{k}+\frac{1}{k+i}\d_{i,k}, \ \ \  \ j=k,
\end{array}
\right.
\end{equation*}
satisfies the $L_1$-weak ergodicity.

\subsection{Continuous case} Let $(X,\mathcal{F},\m)$ be a probability space and
$P^{[k,m]}(x,A)$ be a NHDMP on this space.

\begin{thm}\label{EPP} Let $P^{[k,m]}(x,A)$ be a NHDMP on $(X,\mathcal{F},\m)$. If
for every $k\in\bz_+$ there exists a set $A_k\in\mathcal{F}$ and a
number $\a_k>0$ such that
\begin{equation}\label{pa}
P^{[k-1,k]}(x,A_k)\geq\a_k \ \textrm{for all}\ \ x\in X,\ k\in\bn
\end{equation}
where
\begin{equation}\label{al}
\sum_{n=1}^\infty\bigg(1-\frac{\a_n}{2}\bigg)=\infty.
\end{equation}
Then the NHDMP satisfies the $L_1$-weak ergodicity.
\end{thm}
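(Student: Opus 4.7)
The strategy is to apply Corollary~\ref{EP21}, from which $L_1$-weak ergodicity follows via Theorem~\ref{EP2}. Thus I must construct, for each $k\in\bz_+$, a positive measure $\m_k\in L^1$ with $\|\m_k\|_1\neq 0$ such that for every $\d>0$ and every $\l\in\frak{M}$ there is a set $X_k\in\cf$ with $\m(X\setminus X_k)<\d$ and $P^{[k,k+1]}_*\l\geq\m_k 1_{X_k}$; moreover the masses must satisfy $\sum_n\|\m_n\|_1=\infty$.

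The measure $\m_k$ should be built directly from the Doeblin-type bound at step $k+1$. Integrating $P^{[k,k+1]}(x,A_{k+1})\geq\a_{k+1}$ against an arbitrary $\l\in\frak{M}$ yields $(P^{[k,k+1]}_*\l)(A_{k+1})\geq\a_{k+1}$, which motivates taking $\m_k:=\frac{\a_{k+1}}{\m(A_{k+1})}\m 1_{A_{k+1}}$, so that $\|\m_k\|_1=\a_{k+1}$; here $\m(A_{k+1})>0$ is forced by the $\m$-measurability of the kernel together with the positivity $(P^{[k,k+1]}_*\l)(A_{k+1})\geq\a_{k+1}>0$. Choosing $X_k=X$, the required inequality $P^{[k,k+1]}_*\l\geq\m_k 1_{X_k}$ follows under the standard reading of the Doeblin condition as a pointwise minorization $P^{[k,k+1]}(x,B)\geq\a_{k+1}\m(B\cap A_{k+1})/\m(A_{k+1})$ for every $B\in\cf$. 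Plugging $\|\m_k\|_1=\a_{k+1}$ into the contraction estimate $\g_i\leq 1-\|\m_{K_{i-1}}\|_1/2$ from Proposition~\ref{EP1}, one obtains $\prod_i\g_i\to 0$ precisely by the stated divergence hypothesis $\sum_n(1-\a_n/2)=\infty$, delivering the conclusion via Theorem~\ref{EP2}.

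The main obstacle is the passage from the set-level bound $P^{[k,k+1]}(x,A_{k+1})\geq\a_{k+1}$ to the pointwise, density-level inequality $P^{[k,k+1]}_*\l\geq\m_k 1_{X_k}$ uniformly in $\l\in\frak{M}$; a mass-only Doeblin bound does not automatically distribute that mass in a density-uniform way, so the critical interpretive point is that the hypothesis should be understood as a genuine Doeblin--Nummelin minorization. Once this is granted, the argument is parallel to the countable case Theorem~\ref{a1}, where the explicit pointwise bound $p^{[k-1,k]}_{i,j}\geq\m^{(k)}_j$ was secured in exactly the same fashion and the reduction to Corollary~\ref{EP21} is then routine.
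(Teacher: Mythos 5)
You have correctly identified the crux: the hypothesis \eqref{pa} is a set-level (mass-only) bound, while condition ($C_1$) demands a density-level minorization $P^{[k,k+1]}_*\l\geq\m_k 1_{X_k}$. But your resolution --- declaring that the hypothesis ``should be understood as a genuine Doeblin--Nummelin minorization'' $P^{[k,k+1]}(x,B)\geq\a_{k+1}\,\m(B\cap A_{k+1})/\m(A_{k+1})$ --- is not a proof of the stated theorem; it replaces the hypothesis by a strictly stronger one. Nothing in \eqref{pa} forces the mass $\a_{k+1}$ to be spread over $A_{k+1}$ in a way comparable to $\m 1_{A_{k+1}}$: as $x$ varies, the measures $P^{[k,k+1]}(x,\cdot\cap A_{k+1})$ could concentrate on essentially disjoint small portions of $A_{k+1}$, in which case no lower bound of the form $c\,\m 1_{A_{k+1}}$ with $c>0$ holds. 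The paper takes a different route: it sets $\n_k(A)=\bigwedge_{x\in X}P^{[k-1,k]}(x,A\cap A_k)$, the greatest lower bound in the lattice of measures (citing Theorem IV.7.5 of Dunford--Schwartz for the fact that this is again a measure), asserts $\n_k(A_k)\geq\a_k$, normalizes $\m_k=\n_k(\cdot\cap A_k)/\n_k(A_k)$, obtains $P^{[k-1,k]}_*\d_x\geq\a_k\m_k$ for Dirac measures, and extends to all of $\widetilde{\frak{M}}$ by convexity and weak density before applying Corollary \ref{EP21}. The minorizing measure is thus manufactured from the kernels themselves rather than postulated; that construction is exactly the step your argument omits. (Whether the paper's own assertion $\n_k(A_k)\geq\a_k$ is fully justified is a separate question, since a lattice infimum need not dominate the setwise infimum; but the paper at least attempts to derive the minorant from \eqref{pa}, which your proposal does not.)

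A secondary point: you claim that $\prod_i\g_i\to 0$ follows ``precisely'' from $\sum_n(1-\a_n/2)=\infty$. With $\g_i\leq 1-\|\m_{K_{i-1}}\|_1/2$ and $\|\m_k\|_1$ proportional to $\a_{k+1}$, the product tends to zero if and only if $\sum_n\a_n=\infty$; this is not implied by the divergence of $\sum_n(1-\a_n/2)$, which holds automatically because each term is at least $1/2$. The paper's own appeal to Corollary \ref{EP21} exhibits the same mismatch between \eqref{al} and \eqref{1gg2}, so the issue is arguably inherited from the statement, but you should not present the implication as exact.
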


\begin{proof} To prove the statement it is enough to establish that the process
satisfies condition $C_1$. Indeed, for each $k\in\bz$ let us define
\begin{equation*}
\n_k(A)=\bigwedge_{x\in X}P^{[k-1,k]}(x,A\cap A_k), \ \ A\in\cf
\end{equation*}
Due to Theorem IV.7.5 \cite{Du} the defined mapping $\n_k$ is a
measure on $X$, and moreover, one has $\n_k(A_k)\geq\a_k$. Now put
$$
\m_k(A)=\frac{\n_k(A\cap A_k)}{\n_k(A_k)},  \ \ A\in\cf.
$$

Then one can see that
\begin{equation}\label{pa1}
P^{[k-1,k]}_*\d_x\geq\a_k\m_k \ \textrm{for all}\ \ x\in X, k\in\bn.
\end{equation}

It is clear that $\|\m_k\|_1\neq 0$.

Denote
$$\cm=\bigg\{\nu=\sum_{i=1}^n\alpha_i\delta_{x_i}: \
\sum_{i=1}^n\alpha_i=1, \alpha_i\geq0, \{x_i\}_{i=1}^n\subset X,\
n\in\mathbb{N}\bigg\}$$ which is convex set. Therefore, from
\eqref{pa1} we immediately find that
\begin{equation}\label{pa2}
P^{[k-1,k]}_*\m\geq\a_k\m_k \ \textrm{for all}\ \ \m\in\mathcal{M}.
\end{equation}

Due to the fact (see \cite{Du}) that the set $\mathcal{M}$ is a weak
dense subset of the set of all probability measures
$\widetilde{\frak{M}}$ on $(X,\mathcal{F})$, i.e.
$\overline{\cm}^w=\widetilde{\frak{M}}$. Hence, from \eqref{pa2} one
gets
\begin{equation}\label{pa3}
P^{[k-1,k]}_*\l\geq\a_k\m_k \ \textrm{for all}\ \
\l\in\widetilde{\frak{M}}.
\end{equation}

Now for each each $k\in\bz_+$ we put $X_k=X$, then from \eqref{pa3}
it follows condition $C_1$. So, taking into account \eqref{al}, from
Corollary \ref{EP21} we get the desired assertion.
\end{proof}

\subsection{Quadratic stochastic processes}

In this section we apply the obtained results to discrete time
quadratic stochastic processes. Note that such kind of processes
relate with quadratic operators as well as Markov processes with
linear operators (see \cite{GMR} for review).

Let $(X,\mathcal{F},\m)$ be a probability space. We recall that a
family of functions $\{Q^{[k,n]}(x, y, A)\}$ defined for $n>k$
($k,n\in\bz_+)$ for all $x,y\in X$, $A\in\mathcal{F}$, is called
{\it discrete quadratic stochastic process (DQSP)} if the following
conditions are satisfied:
\begin{enumerate}

\item[(i)]  $Q^{[k,n]}(x,y,A) = Q^{[k,n]}(y,x,A)$ for any $x, y\in X$ and
$A\in \mathcal{F}$;

\item[(ii)]$Q^{[k,n]}(x,y,\cdot)\in \frak{M}$ for any fixed $x, y\in X$;

\item[(iii)] $Q^{[k,n]}(x,y,A)$ as a function of $x$ and $y$ is measurable
on $(X\times X, {\mathcal{F}}\otimes{\mathcal{F}})$ for any $A\in
\mathcal{F}$;

\item[(iv)] (Analogue of the Chapman-Kolmogorov equation) for the
initial
 measure $\mu\in \frak{M}$ and arbitrary $k<m<n$, $k,m,n\in\bz_+$ we have either

 (iv)$_A$
 $$
Q^{[k,n]}(x,y,A) = \int_X\int_X
Q^{[k,m]}(x,y,du)Q^{[k,n]}(u,v,A)\mu_m(dv),$$ where the measure
$\mu_m$ on $(X,{\mathcal{F}})$ is defined by
$$\mu_m(B)=\int_X\int_X Q^{[0,m]}(x,y,B)\mu(dx)\mu(dy),$$
for any $B\in {\mathcal{ F}}$, or

(iv)$_B$ $$Q^{[k,n]}(x,y,A)= \int_X \int_X \int_X \int_X
Q^{[k,m]}(x,z,du)Q^{[k,m]}(y,v,dw)Q^{[m,n]}(u,w,A)\mu_k(dz)\mu_k(dw).$$
\end{enumerate}

If the condition $(iv)_A$ (resp. $(iv)_B$) holds, then DQSP is
called of {\it type (A)} (resp. (B)).

The process $Q^{[k,n]}(x,y,A)$ can be interpreted as the probability
of the following event: if $x$ and $y$ in $X$ interact at time $k$,
then one of the elements of the set $A\in \mathcal{F}$ will be
realized at time $n$. All phenomena in physics, chemistry, and
biology develop along non-zero finite time intervals. Therefore, we
assume that the maximum of these values of time is equal to 1.
Hence, $Q^{[k,n]}(x,y,A)$ is defined for $n-k\geq 1$ (we refer the
reader to \cite{GMR} for more information).

By $\frak{M}^2$ we denote the set of all probability measures on
$X\times X$ which are absolutely continuous w.r.t. $\m\otimes\mu$,
i.e. $\frak{M}^2$ can be considered as a subset of $L^1(X\times
X,{\mathcal{F}}\otimes{\mathcal{F}},\mu\otimes\mu)$. Given DQSP
$Q^{[k,n]}(x,y,A)$ one can define
\begin{eqnarray}\label{Q*}
&& (Q^{[k,n]}_*\tilde\n)(A)=\int_X\int_X Q^{[k,n]}(x,y,A)d\tilde\n
(x,y), \ \ \ \ \tilde\n\in L^1(X\times X,\mu\otimes\mu).
\end{eqnarray}

We recall that a DQSP $Q^{[k,n]}(x,y,A)$ is said to satisfy the
\textit{$L_1$-weak ergodicity} ( or {\it ergodic principle}) if for
any probability measures $\tilde\l,\tilde\nu\in\frak{M}^2$ and
$k\in\bz_+$ one has
$$\lim_{n\rightarrow\infty}\|Q^{[k,n]}_*\tilde\l-Q^{[k,n]}_*\tilde\nu\|_1=0;$$

Let $Q^{[k,n]}(x,y,A)$ be a given DQSP. Now define the following
transition probability
\begin{equation}\label{Q-P}
P_Q^{[k,n]}(x,A)=\int_X Q^{[k,n]}(x,y,A)d\m_k(y).
\end{equation}

In \cite{M} it has been proved the following

\begin{thm}\label{EP-PQ} Let $Q^{[k,n]}(x,y,A)$ be a given DQSP on
$(X,\mathcal{F},\mu)$. Then the following statements hold true:
\begin{enumerate}
\item[(i)] the defined $P_Q^{[k,n]}(x,A)$ is a NHDMP on
$(X,\mathcal{F},\mu)$;

\item[(ii)] the process $P_Q^{[k,n]}(x,A)$ satisfies the $L_1$-weak
ergodicity if and only if  $Q^{[k,n]}(x,y,A)$ satisfies the
$L_1$-weak ergodicity.
\end{enumerate}
\end{thm}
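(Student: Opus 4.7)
\medskip

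\noindent\textbf{Proof plan.} My plan is to deduce (ii) from the elementary identity
\[
(P_Q^{[k,n]})_*\l\,(A)=\int_X\!\!\int_X Q^{[k,n]}(x,y,A)\,d\l(x)\,d\m_k(y)=\bigl(Q^{[k,n]}_*(\l\otimes\m_k)\bigr)(A),\qquad \l\in\frak{M},
\]
which relates the action of the induced Markov kernel on $\frak{M}$ to the action of the quadratic process on product measures in $\frak{M}^2$, while (i) is a direct check using this identity together with the DQSP Chapman--Kolmogorov equations.

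For (i), the fact that $A\mapsto P_Q^{[k,n]}(x,A)$ is a probability measure and the measurability in $x$ both follow from Fubini together with $Q^{[k,n]}(x,y,\cdot)\in\frak{M}$, and $\m$-measurability is inherited from that of $Q^{[k,n]}$ since $\m(A)=0$ forces $Q^{[k,n]}(x,y,A)=0$ for every $x,y$. For the Kolmogorov--Chapman relation I would plug $(iv)_A$ (or $(iv)_B$) into \eqref{Q-P}, reorder the iterated integrals by Fubini, and recognize the inner one as $P_Q^{[k,m]}(x,du)$ and the outer one as $P_Q^{[m,n]}(u,A)$; this is essentially bookkeeping.

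For (ii), the easy direction follows immediately from the displayed identity: for $\l,\n\in\frak{M}$ the measures $\l\otimes\m_k$ and $\n\otimes\m_k$ belong to $\frak{M}^2$, whence
\[
\|(P_Q^{[k,n]})_*\l-(P_Q^{[k,n]})_*\n\|_1=\|Q^{[k,n]}_*(\l\otimes\m_k)-Q^{[k,n]}_*(\n\otimes\m_k)\|_1\To 0
\]
whenever the DQSP is $L_1$-weakly ergodic. For the reverse direction my plan is to perform one extra application of the DQSP Chapman--Kolmogorov equation. Fix $k$ and choose any $m>k$; integrating $(iv)_A$ (resp.\ $(iv)_B$) against $d\tilde\l(x,y)$ with $\tilde\l\in\frak{M}^2$ and interchanging the integrations should produce a representation
\[
Q^{[k,n]}_*\tilde\l=Q^{[m,n]}_*(\eta\otimes\m_m)=(P_Q^{[m,n]})_*\eta
\]
for a suitable $\eta\in\frak{M}$ depending only on $\tilde\l$ and on the transitions on $[k,m]$. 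Performing the same construction for $\tilde\n$ gives some $\zeta\in\frak{M}$, and the $L_1$-weak ergodicity of $P_Q$ applied to $\eta,\zeta$ at time $m$ then yields the desired convergence.

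The main obstacle I anticipate is precisely this reverse direction, where the bookkeeping is sensitive to which form of Chapman--Kolmogorov is assumed: $(iv)_B$ splits the past into two independent copies of the DQSP and so produces a double integration against $\m_k\otimes\m_k$ rather than a single one against $\m_m$. The key technical point to verify carefully is that, after interchanging the order of integration, the emerging measure $\eta$ really is a scalar probability measure in $\frak{M}$ (and not merely an element of $\frak{M}^2$), so that the $L_1$-weak ergodicity hypothesis on the induced NHDMP $P_Q^{[m,n]}$ can be invoked directly.
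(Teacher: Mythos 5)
First, note that the paper itself does not prove Theorem \ref{EP-PQ}: it is quoted as a result of \cite{M}, so there is no internal proof to compare against. Judged on its own merits, your proposal is sound for part (i) and for the forward implication of part (ii): the identity $(P_Q^{[k,n]})_*\l=Q^{[k,n]}_*(\l\otimes\m_k)$ is correct (one only needs to observe that $\m_k\ll\m$, so that $\l\otimes\m_k\in\frak{M}^2$), and the Kolmogorov--Chapman relation for $P_Q$ follows from either (iv)$_A$ or (iv)$_B$ together with the consistency relation $\m_m(\cdot)=\int_X\int_X Q^{[k,m]}(y,v,\cdot)\,\m_k(dy)\,\m_k(dv)$.

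The genuine gap is in the reverse implication for processes of type (B), and it is precisely the point you flag without resolving. For type (A) your scheme works verbatim: integrating (iv)$_A$ against $\tilde\l$ gives $Q^{[k,n]}_*\tilde\l=(P_Q^{[m,n]})_*\eta$ with $\eta=Q^{[k,m]}_*\tilde\l\in\frak{M}$, and the $L_1$-weak ergodicity of $P_Q$ applies directly. But for type (B) the same computation produces
$$
Q^{[k,n]}_*\tilde\l=Q^{[m,n]}_*\tilde\eta,\qquad
\tilde\eta(du,dw)=\int_X\int_X P_Q^{[k,m]}(x,du)\,P_Q^{[k,m]}(y,dw)\,d\tilde\l(x,y),
$$
and $\tilde\eta$ is a general, non-product element of $\frak{M}^2$ whenever $\tilde\l$ is not itself a product measure. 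You therefore land on the quantity $\|Q^{[m,n]}_*\tilde\eta-Q^{[m,n]}_*\tilde\zeta\|_1$, which is an instance of the very statement being proved rather than an instance of the hypothesis on the Markov process $P_Q^{[m,n]}$; the argument is circular at that point. Closing it needs an extra idea --- for instance a decoupling step that compares $Q^{[m,n]}_*\tilde\eta$ with $Q^{[m,n]}_*(\eta_1\otimes\m_m)$, where $\eta_1$ is a marginal of $\tilde\eta$, and controls the error, or the argument of \cite{M} itself. As written, part (ii) is therefore incomplete for DQSPs of type (B).
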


This theorem allows us to prove the following result.

\begin{thm}\label{WE-PQ} Let $Q^{[k,n]}(x,y,A)$ be a given DQSP on
$(X,\mathcal{F},\mu)$. If for every $k\in\bz_+$ there exists a set
$A_k\in\mathcal{F}$ and a number $\a_k>0$ such that
\begin{equation}\label{qpa1}
Q^{[k-1,k]}(x,y,A_k)\geq\a_k \ \textrm{for all}\ \ x,y\in X,\
k\in\bn
\end{equation}
where
\begin{equation}\label{qal}
\sum_{n=1}^\infty\bigg(1-\frac{\a_n}{2}\bigg)=\infty,
\end{equation}
then the DQSP is $L_1$-weak ergodic.
\end{thm}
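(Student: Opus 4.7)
The plan is to reduce the DQSP statement to the corresponding NHDMP result via Theorem~\ref{EP-PQ}, and then to invoke Theorem~\ref{EPP} for the linearised process.

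First I would introduce the transition kernel $P_Q^{[k,n]}(x,A)$ defined by \eqref{Q-P}. By Theorem~\ref{EP-PQ}(i) this is a NHDMP on $(X,\cf,\m)$, and by Theorem~\ref{EP-PQ}(ii) the $L_1$-weak ergodicity of $Q^{[k,n]}$ is equivalent to that of $P_Q^{[k,n]}$. This is where the whole force of the earlier linearisation result is used: a two-variable quadratic ergodicity question is converted to a one-variable linear one by averaging against the marginal measures $\m_k$.

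Next I would verify the hypotheses of Theorem~\ref{EPP} for $P_Q^{[k,n]}$ using the \emph{same} sets $A_k$ and constants $\a_k$ as in \eqref{qpa1}. Substituting the pointwise bound \eqref{qpa1} into the defining integral \eqref{Q-P} with $n=k$ gives, for every $x\in X$,
\[
P_Q^{[k-1,k]}(x,A_k)=\int_X Q^{[k-1,k]}(x,y,A_k)\,d\m_{k-1}(y)\geq \a_k\int_X d\m_{k-1}(y)=\a_k,
\]
which is precisely condition \eqref{pa}. The divergence assumption \eqref{qal} is identical to \eqref{al}, so Theorem~\ref{EPP} applies and yields the $L_1$-weak ergodicity of $P_Q^{[k,n]}$. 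Transporting this conclusion through Theorem~\ref{EP-PQ}(ii) gives the required $L_1$-weak ergodicity of $Q^{[k,n]}$.

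The argument is essentially a routine reduction, so there is no serious technical obstacle; the real work is hidden in Theorem~\ref{EP-PQ} and Theorem~\ref{EPP}. The one point that merits attention is the uniformity of the bound in \eqref{qpa1}: because $\a_k$ is independent of \emph{both} $x$ and $y$, the integration against the probability measure $\m_{k-1}$ preserves it, and a Doeblin-type inequality for the linearised process can indeed be extracted. Without uniformity in $y$ one would only obtain bounds that depend on the auxiliary measures $\m_{k-1}$, and the clean reduction to Theorem~\ref{EPP} would break down.
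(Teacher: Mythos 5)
Your proposal is correct and follows essentially the same route as the paper: linearise via \eqref{Q-P}, integrate the uniform bound \eqref{qpa1} against the probability measure to get \eqref{pa}, apply Theorem \ref{EPP}, and transfer back through Theorem \ref{EP-PQ}(ii). (Your use of $\m_{k-1}$ in the integral is in fact the correct reading of \eqref{Q-P}; the paper's proof writes $\m_k$ there, which is an immaterial index slip since either is a probability measure.)
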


\begin{proof} Consider the process $P_Q^{[k,n]}(x,A)$. Then from \eqref{Q-P} and
\eqref{qpa1} one finds
\begin{equation*}\label{qpa2}
P_Q^{[k-1,k]}(x,A_k)=\int_X Q^{[k-1,k]}(x,y,A_k)d\m_k(y)\geq\a_k \
\textrm{for all}\ \ x\in X,\ k\in\bn.
\end{equation*}
Hence, the Markov process $P_Q^{[k,n]}(x,A)$ satisfies the
conditions of Theorem \ref{EPP}, so it is $L_1$-weak ergodic.
Therefore, Theorem \ref{EP-PQ} implies the $L_1$-weak ergodicity of
$Q^{[k,n]}(x,y,A)$.
\end{proof}

Note that the last theorem improves the result of \cite{SGa}.

\section*{Acknowledgments} The author
acknowledges the MOHE grant FRGS11-022-0170 and the Junior Associate
scheme of the Abdus Salam International Centre for Theoretical
Physics, Trieste, Italy.

\end{document}